\theoremstyle{plain}
\newtheorem{theorem}{Theorem}[section]
\newtheorem{lemma}[theorem]{Lemma}
\newtheorem{question}[theorem]{Question}
\newcommand{\CC}{{\mathbb C}}
\newcommand{\DD}{{\mathbb D}}
\newcommand{\RR}{{\mathbb R}}
\newcommand{\cU}{{\cal U}}
\renewcommand{\hat}{\widehat}
\DeclareMathOperator{\inter}{int}
\DeclareMathOperator{\psh}{PSH}
\begin{document}

\title{A uniform boundedness principle in pluripotential theory}

\author[Kosi\'nski]{\L ukasz Kosi\'nski}
\address{Institute of Mathematics, Jagiellonian University, \L ojasiewicza 6, 30-348 Krak\'ow, Poland.}
\email{lukasz.kosinski@im.uj.edu.pl}
\thanks{\L K supported by the Ideas Plus grant 0001/ID3/2014/63 of the Polish Ministry of Science
and Higher Education. }

\author[Martel]{\'Etienne Martel}
\address{D\'epartement d'informatique et de g\'enie logiciel, Universit\'e Laval, Qu\'ebec City (Qu\'ebec), Canada G1V 0A6}
\email{etienne.martel.1@ulaval.ca}
\thanks{EM supported by an NSERC undergraduate student research award}

\author[Ransford]{Thomas Ransford}
\address{D\'epartement de math\'ematiques et de statistique, Universit\'e Laval, 
Qu\'ebec City (Qu\'ebec),  Canada G1V 0A6,.}
\email{thomas.ransford@mat.ulaval.ca}
\thanks{TR supported by grants from NSERC and the Canada research chairs program}

\date{7 August 2017}

\begin{abstract}
For families of continuous plurisubharmonic functions
we show that, in a local sense,  separately bounded above implies bounded above.
\end{abstract}

\maketitle

\section{The uniform boundedness principle}\label{S:ubp}

Let $\Omega$ be an open subset of $\CC^N$. 
A function  $u:\Omega\to[-\infty,\infty)$ is called
\emph{plurisubharmonic} if:
\begin{enumerate}
\item $u$ is upper semicontinuous, and
\item $u|_{\Omega\cap L}$ is subharmonic, for each complex line $L$.
\end{enumerate}
For background information on plurisubharmonic functions, 
we refer to the book of Klimek \cite{Kl91}.

It is apparently an open problem whether in fact (2) implies (1) if $N\ge2$. 
In attacking this problem, 
we have repeatedly run up against an
obstruction in the form of a  uniform boundedness principle for plurisubharmonic functions.
This principle, which we think is of interest in its own right, is the main subject of this note.
Here is the formal statement.

\begin{theorem}[Uniform boundedness principle]\label{T:ubp}
Let $D\subset\CC^N$ and $G\subset\CC^M$ be domains, where $N,M\ge1$, and let $\cU$ be a family of continuous 
plurisubharmonic functions on $D\times G$. Suppose that:
\begin{enumerate}[label=\normalfont(\roman*)]
\item $\cU$ is locally uniformly bounded above on $D\times\{w\}$, for each $w\in G$;
\item $\cU$ is locally uniformly bounded above on $\{z\}\times G$, for each $z\in D$.
\end{enumerate}
Then:
\begin{enumerate}[resume, label=\normalfont(\roman*)]
\item $\cU$ is locally uniformly bounded above on $D\times G$.
\end{enumerate}
\end{theorem}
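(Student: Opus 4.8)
The plan is to localise the problem, use a Baire–category argument to produce a uniform bound on a small ``box'', and then propagate that bound along complex lines by attaching analytic discs and invoking a Hartogs–type lemma for subharmonic functions.

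First I would fix a point $(z_0,w_0)\in D\times G$; since every compact subset of $D\times G$ is contained in a product of closed polydiscs, it suffices to exhibit a neighbourhood of $(z_0,w_0)$ on which $\cU$ is uniformly bounded above. Fix a closed polydisc $\overline P=\overline P(w_0,r)\subset G$. For $n\in\NN$ the set $E_n:=\{z\in D:u(z,w)\le n\text{ for all }w\in\overline P,\ u\in\cU\}$ is closed, being an intersection of the closed sets $\{z:u(z,w)\le n\}$ --- this uses continuity of the functions $u(\cdot,w)$ --- and $\bigcup_n E_n=D$ by hypothesis~(ii). By the Baire category theorem some $E_{n_0}$ has nonempty interior, so there are a closed ball $\overline B=\overline B(z_1,\rho)\subset D$ and a constant $A$ with $u\le A$ on $\overline B\times\overline P$ for every $u\in\cU$.

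Next I would let $\Omega_0\subset D$ be the set of points $z$ for which there exist a neighbourhood $V$ of $z$ and a polydisc $Q$ with $w_0\in Q\subset G$ such that $\sup_{u\in\cU}\sup_{V\times Q}u<\infty$. This set is open and nonempty, since $\inter\overline B\subset\Omega_0$. If I can show $\Omega_0$ is also closed in $D$, then $\Omega_0=D$ because $D$ is connected, hence $z_0\in\Omega_0$ and we are done. For closedness, take $z_*\in\overline{\Omega_0}\cap D$; choose $z'\in\Omega_0$ near $z_*$, with its bound on some $V'\times Q'$ and $z'\in V'$, and also record, from hypothesis~(ii) at $z_*$, a polydisc $Q_*\ni w_0$ and a bound $u\le A_*$ on $\{z_*\}\times Q_*$. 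Restricting the $z$–variable to the complex line $L$ through $z'$ and $z_*$ reduces the $z$–dependence to one complex variable: for each fixed $w$, $u(\cdot,w)|_L$ is subharmonic, so its mean over the circle $\{|z-z'|=\varrho\}$ is convex and nondecreasing in $\log\varrho$; moreover this mean is plurisubharmonic in $w$ (an average of plurisubharmonic functions) and, by hypothesis~(i), finite at every point of $G$. Combining the convexity estimate for these means with a Hartogs–type argument --- upgrading the merely pointwise–in–$w$ finiteness coming from the hypotheses to a bound that is locally uniform in $w$ near $w_0$ --- should extend the uniform bound to a box centred at a point strictly closer to $z_*$; iterating this, and chaining finitely many discs (which is possible because $D$ is connected), transports the bound to $z_*$. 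For general $N$ one works throughout along complex lines, or argues by induction on $N$.

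The step I expect to be the main obstacle is precisely this Hartogs–type passage from ``separately (pointwise) bounded above in $w$'' to ``locally uniformly bounded above in $w$''. The naive estimate --- the two–constants theorem applied along $L$ --- interpolates between the uniform bound on $\overline B\times\overline P$ and the bound furnished near $z_*$ by the hypotheses, but the latter is only pointwise finite in $w$, so the interpolation produces a term in $w$ that is not a priori controlled uniformly near $w_0$. Neutralising that term is exactly where plurisubharmonicity in the $w$–variable, the continuity hypothesis, and a Fatou / sub–mean–value argument in the style of the classical Hartogs lemma on subharmonic functions must all be brought to bear; this is where the genuine content of the theorem lies, and it is the analogue here of the local–boundedness step in the proof of Hartogs' theorem on separately holomorphic functions.
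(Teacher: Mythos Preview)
Your Baire-category opening is fine and does produce a product box on which $\cU$ is uniformly bounded. The trouble is entirely in the propagation step, and the gap you flag is not a technicality: it is the whole theorem over again. The two-constants estimate along the complex line through $z'$ and $z_*$ interpolates between your uniform bound $A$ on the small disc and the outer quantity $C(w):=\sup_{u\in\cU}\max_{|z-z'|=R}u(z,w)$, and yields something like $u(z_*,w)\le(1-\lambda)A+\lambda\,C(w)$. Hypothesis~(i) says only that $C(w)<\infty$ for each $w$; upgrading this to local uniform boundedness near $w_0$ is exactly the statement being proved, with the roles of the variables exchanged. The classical Hartogs lemma for subharmonic functions cannot help, since it \emph{assumes} local uniform boundedness. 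Nor does a second Baire argument in $w$: it would yield a polydisc in $G$ on which $C$ is bounded, but with no control on its location, so you lose the position $w_0$ just as you gain ground in $z$. And the counterexample at the end of \S\ref{S:pfubp} shows that a pointwise-bounded family of continuous plurisubharmonic functions of $w$ alone need not be locally uniformly bounded, so $C$ really can blow up. You note that the analogous step in the proof of Hartogs' theorem is the local-boundedness step; but that step is driven by Cauchy estimates for Taylor coefficients, and there is no substitute for those in the plurisubharmonic category.

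The paper sidesteps this circularity by reducing to Hartogs' theorem itself rather than imitating its proof. Assuming failure at some $a$, one takes a polydisc $P=Q\times R\ni a$, sequences $a_n\to a$ and $u_n\in\cU$ with $u_n(a_n)>n$, and sets $P_n:=\{\zeta\in P:u_n(\zeta)\le n\}$. Continuity makes $P_n$ compact; since $a_n\notin\hat{(P_n)}_{\psh(D\times G)}$ and $\hat{P_n}\subset P$, the equality of polynomial and plurisubharmonic hulls produces a polynomial $p_n$ small on $P_n$ but large at $a_n$. Hypotheses (i) and (ii) force every slice $Q\times\{w\}$ and $\{z\}\times R$ eventually to lie inside $P_n$, so the series $f:=\sum p_n$ converges slicewise and is separately holomorphic on $\inter(P)$, yet unbounded near $a$ --- contradicting Hartogs' theorem. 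Thus the passage ``pointwise bounded $\Rightarrow$ locally uniformly bounded'' that blocks your argument is exported to the holomorphic world, where power-series machinery is available; the hull identity is precisely the bridge from plurisubharmonic data to polynomials that makes this export possible.
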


In other words, if there is an upper bound for $\cU$ on each compact subset of $D\times G$ of the form $K\times\{w\}$ or $\{z\}\times L$, then there is an upper bound for $\cU$ on every compact subset of $D\times G$. The point is that  we have no \textit{a priori} quantitative information about these upper bounds, merely that they exist. In this respect, the result resembles the classical Banach--Steinhaus theorem from functional analysis.

The proof of Theorem~\ref{T:ubp} is based on two well-known but non-trivial results from several complex variables: the equivalence (under appropriate assumptions) of plurisubharmonic hulls and polynomial hulls, and Hartogs' theorem on separately holomorphic functions. The details of the proof are presented in \S\ref{S:pfubp}.

The Banach--Steinhaus theorem is usually stated as saying that a family of bounded linear operators on a Banach space $X$ that is pointwise-bounded on $X$ is automatically 
norm-bounded. There is a stronger version of the result in which one assumes merely that the operators are pointwise-bounded on a non-meagre subset $Y$ of $X$, but with the same conclusion. This sharper form leads to new applications (for example, a nice one in the theory of Fourier series can be found in \cite[Theorem~5.12]{Ru87}). Theorem~\ref{T:ubp} too possesses a sharper form, in which one of the conditions (i),(ii) is merely assumed to hold on a non-pluripolar set. This improved version of theorem is the subject of \S\ref{S:ubpgen}.

We conclude the paper in \S\ref{S:appl} by considering applications of these results, and we also discuss the connection with the upper semicontinuity problem mentioned at the beginning of the section.

\section{Proof of the uniform boundedness principle}\label{S:pfubp}

We shall need two auxiliary results. The first one concerns hulls. Given a compact subset $K$ of $\CC^N$, its
\emph{polynomial hull} is defined by
\[
\hat{K}:=\{z\in\CC^N:|p(z)|\le\sup_K|p|\text{~for every polynomial~$p$ on $\CC^N$}\}.
\]
Further, given an open subset $\Omega$ of $\CC^N$ containing $K$, the \emph{plurisubharmonic hull} of $K$ with respect to $\Omega$ is defined by
\[
\hat{K}_{\psh(\Omega)}:=\{z\in\Omega:u(z)\le\sup_Ku\text{~for every plurisubharmonic $u$ on $\Omega$}\}.
\]
Since $|p|$ is plurisubharmonic on $\Omega$ for every polynomial $p$,
it is evident that $\hat{K}_{\psh(\Omega)}\subset \hat{K}$.
In the other direction, we have the following result.

\begin{lemma}[\protect{\cite[Corollary~5.3.5]{Kl91}}]\label{L:hulls}
Let $K$ be a compact subset of $\CC^N$ and let $\Omega$ be an open subset of $\CC^N$ such that $\hat{K}\subset\Omega$. Then
$\hat{K}_{\psh(\Omega)}=\hat{K}$.
\end{lemma}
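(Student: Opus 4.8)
The plan is to prove the non‑trivial inclusion $\hat{K}\subseteq\hat{K}_{\psh(\Omega)}$ (we already have the reverse), i.e.\ to show that $u(z_0)\le\sup_K u$ whenever $z_0\in\hat{K}$ and $u$ is plurisubharmonic on $\Omega$. My strategy is to first replace $\Omega$ by a much more tractable open set, and then to quote a standard deep result from several complex variables.

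For the reduction I would use that $\hat{K}$ is compact and polynomially convex and that polynomially convex compacta admit a neighbourhood basis of \emph{bounded polynomial polyhedra} (cover the compact set $\overline{B}\setminus\Omega$, for a large closed ball $\overline B\supseteq\hat K$, by finitely many sets $\{|p_j|>1\}$ with $\sup_{\hat K}|p_j|<1$, and intersect with a coordinate box to force boundedness). This produces a polynomial polyhedron $\Omega_0$ with $\hat{K}\subseteq\Omega_0\subseteq\Omega$; such an $\Omega_0$ is bounded, pseudoconvex, and Runge. Since $u|_{\Omega_0}\in\psh(\Omega_0)$ and $K\subseteq\hat{K}\subseteq\Omega_0$, it suffices to prove $\hat{K}_{\psh(\Omega_0)}=\hat{K}$: granting this, $z_0\in\hat{K}=\hat{K}_{\psh(\Omega_0)}$ gives $u(z_0)=u|_{\Omega_0}(z_0)\le\sup_K u|_{\Omega_0}=\sup_K u$, as wanted.

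On $\Omega_0$ I would invoke two classical inputs. The first, genuinely non‑trivial one — proved via Hörmander's $L^2$ estimates for $\bar\partial$, or via the Oka--Cartan theory — is that on a pseudoconvex domain the plurisubharmonic hull of a compact set coincides with its holomorphic hull $\hat{K}_{\mathcal O(\Omega_0)}:=\{z\in\Omega_0:|f(z)|\le\sup_K|f|\ \text{for all}\ f\in\mathcal O(\Omega_0)\}$. Only the inclusion $\hat{K}_{\mathcal O(\Omega_0)}\subseteq\hat{K}_{\psh(\Omega_0)}$ is serious here, since it amounts to manufacturing a separating holomorphic function from a separating plurisubharmonic one, whereas the reverse inclusion is immediate because $\log|f|\in\psh(\Omega_0)$. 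The second input is Oka--Weil: $\Omega_0$ being Runge, polynomials are dense in $\mathcal O(\Omega_0)$ uniformly on compacta, so a routine approximation gives $\hat{K}_{\mathcal O(\Omega_0)}=\hat{K}\cap\Omega_0$, which equals $\hat{K}$ because $\hat{K}\subseteq\Omega_0$. Chaining the two equalities yields $\hat{K}_{\psh(\Omega_0)}=\hat{K}$.

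The step I expect to be the main obstacle is, unsurprisingly, the equivalence of the plurisubharmonic and holomorphic hulls on a pseudoconvex domain; the rest is bookkeeping with neighbourhood bases and approximation. If one preferred to stay closer to the ground, one could instead reduce a general $u$ on $\Omega_0$ to the continuous case by a decreasing regularization on a neighbourhood of $\hat{K}$ (a Dini‑type argument moves $\sup_K$ through the decreasing limit), approximate a continuous plurisubharmonic function near the polynomially convex set $\hat{K}$ by functions $\frac{1}{2m}\log\bigl(|f_{m,1}|^2+\cdots+|f_{m,l_m}|^2\bigr)$ with $f_{m,j}$ holomorphic (a Bremermann‑type approximation) and then polynomial (Oka--Weil), and exploit that $z_0\in\hat{K}$ forces $\|Q(z_0)\|\le\sup_K\|Q\|$ for every polynomial map $Q$; letting $m\to\infty$ gives $u(z_0)\le\sup_K u$. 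This route, however, rests on an approximation theorem of comparable depth, so in either case one cannot dodge a real theorem from several complex variables. As a byproduct, once equality is proved the hulls taken over continuous plurisubharmonic functions, or over functions $\log|f|$ with $f$ holomorphic on $\Omega$, must coincide with $\hat{K}$ as well.
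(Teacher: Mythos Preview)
The paper gives no proof of this lemma; it is simply quoted as \cite[Corollary~5.3.5]{Kl91} and used as a black box in the proof of Theorem~\ref{T:ubp}. There is therefore nothing in the paper to compare your argument against.

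That said, your outline is correct and follows the standard route to this result. The reduction to a bounded polynomial polyhedron $\Omega_0$ with $\hat K\subset\Omega_0\subset\Omega$ is valid (and your description of how to build it is right), and the chain
\[
\hat K_{\psh(\Omega_0)}=\hat K_{\mathcal O(\Omega_0)}=\hat K
\]
is exactly how the equality is established: the second equality is Oka--Weil on a Runge domain, and the first is the genuinely deep step, equivalent to the solution of the Levi problem (or done via H\"ormander's $L^2$ estimates). You have also correctly identified which inclusion in that first equality is the non-trivial one. The alternative Bremermann-type approximation you sketch is likewise a legitimate path and of comparable depth; either way one must invoke a serious theorem, as you say.
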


The second result that we shall need is Hartogs' theorem \cite{Ha06} that separately holomorphic functions are holomorphic.

\begin{lemma}\label{L:Hartogs}
Let $D\subset\CC^N$ and $G\subset\CC^M$ be domains, 
and let $f:D\times G\to\CC$ be a function such that:
\begin{itemize}
\item $z\mapsto f(z,w)$ is holomorphic on $D$, for each $w\in G$;
\item $w\mapsto f(z,w)$ is holomorphic on $G$, for each $z\in D$.
\end{itemize}
Then $f$ is holomorphic on $D\times G$.
\end{lemma}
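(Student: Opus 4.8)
The plan is to prove this along the classical lines: reduce the joint holomorphy of $f$ to a \emph{local boundedness} statement via Osgood's lemma, prove that statement in a two-block special case by combining the Baire category theorem with Hartogs' lemma on subharmonic functions, and then reach the stated generality by induction on the number of variables.

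\emph{(1) Reduction to local boundedness.} First I would invoke (a form of) Osgood's lemma: a function that is separately holomorphic in each single complex variable and is locally bounded is jointly holomorphic. On a small polydisc the iterated one-variable Cauchy integral formula is justified by Fubini's theorem --- a separately holomorphic function is separately continuous, hence Borel measurable, and here also bounded --- and joint continuity, then the power series expansion, follow. Since a holomorphic function of several variables is holomorphic in each coordinate separately, the hypotheses make $f$ separately holomorphic in all $N+M$ coordinates, so it suffices to prove that $f$ is locally bounded on $D\times G$.

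\emph{(2) The key two-block case.} The heart of the matter is the statement: if $g(\zeta,\eta)$ is holomorphic in $\zeta$ on a polydisc $P\subset\CC^p$ for each $\eta$ in a disc $D_0\subset\CC$, and holomorphic in $\eta$ on $D_0$ for each $\zeta$, then $g$ is locally bounded. For each $\eta$ the map $\zeta\mapsto g(\zeta,\eta)$ is holomorphic, hence bounded on $\overline{P'}$ for any $P'\Subset P$, so $M(\eta):=\sup_{\overline{P'}}|g(\cdot,\eta)|<\infty$; as a supremum of the continuous functions $\eta\mapsto|g(\zeta,\eta)|$, $M$ is lower semicontinuous, so the Baire category theorem gives a disc $\Delta=\{|\eta-c|\le\rho\}\Subset D_0$ on which $M\le C$, whence $g$ is bounded by $C$ on $\overline{P'}\times\Delta$. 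For fixed $\zeta\in P'$ write $g(\zeta,\eta)=\sum_{k\ge0}b_k(\zeta)(\eta-c)^k$, valid on $\{|\eta-c|<\rho^*\}$ with $\rho^*:=\operatorname{dist}(c,\partial D_0)>\rho$; the coefficients $b_k(\zeta)=\frac1{2\pi i}\int_{|\eta-c|=r}g(\zeta,\eta)(\eta-c)^{-k-1}\,d\eta$ (any $r<\rho$) are genuinely holomorphic on $P'$ --- Morera together with Fubini, with no circularity, since the contour lies inside $\Delta$, where $g$ is already bounded and (by (1)) jointly continuous. The same bound yields the uniform Cauchy estimate $\frac1k\log|b_k(\zeta)|\le\frac{\log^{+}C}{k}-\log r$ on $P'$, so the plurisubharmonic functions $v_k:=\frac1k\log|b_k|$ are locally uniformly bounded above on $P'$, while for each fixed $\zeta$ the radius of convergence of the series gives $\limsup_k v_k(\zeta)\le-\log\rho^*$. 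Hartogs' lemma on subharmonic functions then supplies, for every $P''\Subset P'$ and $\epsilon>0$, the estimate $v_k\le-\log\rho^*+\epsilon$ on $P''$ for all large $k$; hence for any $\rho'<\rho^*$ the series converges uniformly on $P''\times\{|\eta-c|\le\rho'\}$ and $g$ is bounded there. This slab strictly exceeds $\overline{P'}\times\Delta$ in the $\eta$-direction, and finitely many such enlargements (legitimate because $D_0$ is a disc) make $g$ locally bounded throughout $P\times D_0$.

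\emph{(3) Induction, and the main obstacle.} The stated lemma then follows by induction on the number $n$ of scalar variables: the case $n=1$ is trivial, and for $n\ge2$ the inductive hypothesis together with (1) makes $(z_1,\dots,z_{n-1})\mapsto f$ holomorphic for each $z_n$, so $f$ is holomorphic in $(z_1,\dots,z_{n-1})\in\CC^{n-1}$ for each $z_n$ and holomorphic in $z_n$ for each $(z_1,\dots,z_{n-1})$; then (2) with $p=n-1$ gives local boundedness, and (1) gives joint holomorphy. Applied to our $f$ --- which, by (1), is separately holomorphic in all $N+M$ coordinates --- this proves the lemma. The real difficulty is confined to step (2): making the coefficient functions $b_k$ holomorphic \emph{before} any joint holomorphy of $g$ is available --- the device being to expand about a centre lying inside the region where boundedness is already secured --- and extracting the uniform upper bound on the $v_k$ that is needed to apply Hartogs' lemma on subharmonic functions.
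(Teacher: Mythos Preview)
The paper does not supply a proof of this lemma at all: it is stated as Hartogs' classical theorem and attributed to \cite{Ha06}, to be used as a black box in the proof of Theorem~\ref{T:ubp}. Your outline, by contrast, reconstructs the standard proof (Osgood's lemma to reduce to local boundedness, Baire category to secure boundedness on a slab, Hartogs' lemma on plurisubharmonic functions to propagate, and induction on the number of variables), and the sketch is essentially correct. One point to watch in step~(2) is that each application of Hartogs' lemma forces you to pass from $P'$ to a strictly smaller $P''$, so when you iterate the enlargement in the $\eta$-direction you must argue that the number of steps needed to reach a given compact subset of $D_0$ is finite and can be fixed in advance, allowing you to start with $P'$ large enough that the final shrunk polydisc still contains your target; this is routine but should be said explicitly. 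Since the paper treats the lemma as a citation, there is no methodological comparison to draw beyond noting that your write-up is self-contained where the paper is not.
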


\begin{proof}[Proof of Theorem~\ref{T:ubp}]
Suppose that the result is false. 
Then there exist sequences $(a_n)$ in $D\times G$ and $(u_n)$
in $\cU$ such that  $u_n(a_n)>n$ for all $n$ and $a_n\to a\in D\times G$. Let $P$ be a compact polydisk with centre $a$ such that $P\subset D\times G$. For each $n$, set
\[
P_n:=\{\zeta\in P:u_n(\zeta)\le n\}.
\]
Then $P_n$ is compact, because the functions in $\cU$ are assumed continuous. Further, since $P$ is convex, we have 
$\hat{P_n}\subset P\subset D\times G$.
By Lemma~\ref{L:hulls}, we have $\hat{P_n}=\hat{(P_n)}_{\psh(D\times G)}$. 
As $a_n$ clearly lies outside this plurisubharmonic hull, 
it follows that $a_n$ also lies outside the polynomial hull of $P_n$. Thus there exists a polynomial $q_n$
such that $\sup_{P_n}|q_n|<1$ and $|q_n(a_n)|>1$. Let $r_n$ be a polynomial vanishing at $a_1,\dots,a_{n-1}$
but not at $a_n$, and set $p_n:=q_n^mr_n$, where $m$ is chosen large enough so that 
\[
\sup_{P_n}|p_n|<2^{-n}
\quad\text{and}\quad
|p_n(a_n)|>n+\sum_{k=1}^{n-1}|p_k(a_n)|.
\]
Let us write $P=Q\times R$, where $Q,R$ are compact polydisks such that $Q\subset D$ and $R\subset G$.
Then, for each $w\in R$, the family  $\cU$ is uniformly bounded above on $Q\times\{w\}$,
so eventually $u_n\le n$ on $Q\times\{w\}$. For these $n$, we then have $Q\times\{w\}\subset P_n$ and hence $|p_n|\le 2^{-n}$ on $Q\times\{w\}$. Thus the series 
\[
f(z,w):=\sum_{n\ge1}p_n(z,w)
\]
converges uniformly on $Q\times\{w\}$. Likewise, it converges uniformly on $\{z\}\times R$, for each $z\in D$. We deduce that:
\begin{itemize}
\item $z\mapsto f(z,w)$ is holomorphic on $\inter(Q)$, for each $w\in \inter(R)$;
\item $w\mapsto f(z,w)$ is holomorphic on $\inter(R)$, for each $z\in \inter(Q)$.
\end{itemize}
By Lemma~\ref{L:Hartogs}, $f$ is holomorphic on $\inter(P)$.
On the other hand, for each $n$, our construction gives
\[
|f(a_n)|
\ge|p_n(a_n)|-\sum_{k=1}^{n-1}|p_k(a_n)|-\sum_{k=n+1}^\infty| p_k(a_n)|>n.
\]
Since $a_n\to a$, it follows that $f$ is discontinuous at $a$, the central point of $P$. We thus have arrived at a contradiction, and the proof is complete.
\end{proof}

One might wonder if Theorem~\ref{T:ubp} remains true if we drop one of the assumptions (i) or (ii). Here is a simple example to show that it does not. For each $n\ge1$, set
\[
K_n:=\{z\in\CC:|z|\le n,~1/n\le \arg(z)\le 2\pi\},
\]
and let $(z_n)$ be a sequence such that $z_n\in\CC\setminus K_n$ for all $n$ and $z_n\to0$. By Runge's theorem, for each $n$ there exists a polynomial $p_n$ such that $\sup_{K_n}|p_n|\le 1$  and $|p_n(z_n)|>n$. The sequence $|p_n|$ is then pointwise bounded on $\CC$, but not uniformly bounded in any neighborhood of $0$. 
Thus, if we define $u_n(z,w):=|p_n(z)|$,
then we obtain a sequence of continuous plurisubharmonic functions on $\CC\times\CC$ satisfying (ii) but not (iii).

Although we cannot drop (i) or (ii) altogether, it \emph{is} possible to weaken one of the conditions (i) or (ii)  to hold merely on a set that is `not too small', and still obtain the conclusion (iii). This is the subject of next section.

\section{A stronger form of the uniform boundedness principle}\label{S:ubpgen}

A subset $E$ of $\CC^N$ is called \emph{pluripolar} if there exists a plurisubharmonic function $u$ on $\CC^N$ such that $u=-\infty$ on $E$ but $u\not\equiv-\infty$ on $\CC^N$. Pluripolar sets have Lebesgue measure zero, and a countable union of pluripolar sets is again pluripolar. For further background on pluripolar sets, we again refer to Klimek's book \cite{Kl91}.

In this section we establish the following generalization of Theorem~\ref{T:ubp}, in which we weaken one of the assumptions (i),(ii) to hold merely on a non-pluripolar set.

\begin{theorem}\label{T:ubpgen}
Let $D\subset\CC^N$ and $G\subset\CC^M$ be domains, where $N,M\ge1$, 
and let $\cU$ be a family of continuous 
plurisubharmonic functions on $D\times G$. Suppose that:
\begin{enumerate}[label=\normalfont(\roman*)]
\item $\cU$ is locally uniformly bounded above on $D\times\{w\}$, for each $w\in G$;
\item $\cU$ is locally uniformly bounded above on $\{z\}\times G$, for each $z\in F$,
\end{enumerate}
where $F$ is a non-pluripolar subset of $D$. Then:
\begin{enumerate}[resume, label=\normalfont(\roman*)]
\item $\cU$ is locally uniformly bounded above on $D\times G$.
\end{enumerate}
\end{theorem}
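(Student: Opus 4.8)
The strategy is to deduce Theorem~\ref{T:ubpgen} from Theorem~\ref{T:ubp} by promoting hypothesis~(ii) from the non-pluripolar set $F$ to all of $D$. Introduce
\[
A:=\{z\in D:\cU\text{ is locally uniformly bounded above on }\{z\}\times G\}.
\]
Hypothesis~(ii) says precisely that $F\subset A$, so $A$ is non-pluripolar; and by its very definition $A$ is the set of points at which hypothesis~(ii) of Theorem~\ref{T:ubp} holds. Hence it suffices to prove that $A=D$: once this is known, Theorem~\ref{T:ubp} applies with the same $D$ and $G$ and delivers conclusion~(iii).

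The crux is a local statement: \emph{if $Q$ is a polydisk with $\overline Q\subset D$ such that $\inter(Q)\cap A$ is non-pluripolar, then $\inter(Q)\subset A$}. I would prove this by contradiction, following the proof of Theorem~\ref{T:ubp} almost verbatim. If $z_1\in\inter(Q)\setminus A$, pick (passing to a subsequence) functions $u_n\in\cU$ and points $w_n\to w_0\in G$ with $u_n(z_1,w_n)>n$; by~(i) each value of $w$ occurs only finitely often among the $w_n$, so we may assume the points $a_n:=(z_1,w_n)$ are distinct. Choose a polydisk $R$ with $w_0\in\inter(R)$ and $\overline R\subset G$, and put $P:=\overline Q\times\overline R$ and $P_n:=\{\zeta\in P:u_n(\zeta)\le n\}$. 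The same manipulation with polynomial and plurisubharmonic hulls (Lemma~\ref{L:hulls}) as in \S\ref{S:pfubp} produces polynomials $p_n$ with $\sup_{P_n}|p_n|<2^{-n}$, with $p_k$ vanishing at $a_1,\dots,a_{k-1}$, and with $|p_n(a_n)|>n+\sum_{k<n}|p_k(a_n)|$. By hypothesis~(i) the series $f:=\sum_n p_n$ converges uniformly on $\overline Q\times\{w\}$ for each $w\in\overline R$, so $z\mapsto f(z,w)$ is holomorphic on $\inter(Q)$ for each $w\in\inter(R)$; and since every point of $A$ satisfies~(ii), the series converges uniformly on $\{z\}\times\overline R$ for each $z\in\inter(Q)\cap A$, so $w\mapsto f(z,w)$ is holomorphic on $\inter(R)$ for each $z$ in the non-pluripolar set $\inter(Q)\cap A$. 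Here the classical Hartogs theorem (Lemma~\ref{L:Hartogs}) is not enough, and I would instead invoke a theorem of Terada (a refinement of Lemma~\ref{L:Hartogs}): a function on a product of domains that is holomorphic in the first variable for every value of the second, and holomorphic in the second variable for every value of the first drawn from a non-pluripolar set, is jointly holomorphic. Applied on $\inter(Q)\times\inter(R)$ this makes $f$ holomorphic, hence continuous, at $a:=(z_1,w_0)$. But $p_k(a_n)=0$ whenever $k>n$ (then $a_n$ is among $a_1,\dots,a_{k-1}$), so $|f(a_n)|\ge|p_n(a_n)|-\sum_{k<n}|p_k(a_n)|>n$; since $a_n\to a$, this contradicts continuity of $f$ at $a$, proving the local statement.

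It remains to pass from the local statement to $A=D$, which I would do by a connectedness argument. Since $F$ is non-pluripolar while the union of all neighborhoods $B$ with $F\cap B$ pluripolar meets $F$ in a pluripolar set (the Lindel\"of property together with countable subadditivity of pluripolarity), there is a point $z_*\in F$ near which $F$, hence $A$, is non-pluripolar, so $\inter(Q)\cap A$ is non-pluripolar for some small polydisk $Q\ni z_*$ with $\overline Q\subset D$. Let $\Omega$ be the set of $z\in D$ admitting a polydisk $Q$ with $z\in\inter(Q)$, $\overline Q\subset D$ and $\inter(Q)\cap A$ non-pluripolar. Then $\Omega\neq\emptyset$, $\Omega$ is plainly open, and $\Omega$ is relatively closed in $D$, because any polydisk neighborhood of a point of $\overline\Omega$ meets $\Omega\subset A$ in a non-empty open, hence non-pluripolar, set. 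As $D$ is connected, $\Omega=D$; and the local statement gives $\inter(Q)\subset A$ for each polydisk witnessing membership in $\Omega$, so $D=\Omega\subset A$, whence $A=D$, as required.

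I expect the essential difficulty to be the step where Hartogs' theorem must be replaced: one needs the genuinely non-trivial fact that separate holomorphy with the second variable restricted to a merely non-pluripolar set already forces joint holomorphy, and one must make sure the function $f$ produced by the series construction — which is not assumed bounded, or even continuous, a priori — genuinely meets the hypotheses of the form of Terada's theorem invoked.
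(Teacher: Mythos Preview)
Your proof is correct and essentially the same as the paper's: both define the set $A$, run the polynomial/hull construction from Theorem~\ref{T:ubp} together with Terada's theorem (Lemma~\ref{L:Terada}) to obtain the key local step, and finish with a connectedness argument. The only cosmetic difference is that the paper introduces an auxiliary open set $B\subset A$ (points admitting a neighborhood $V$ with $\cU$ locally bounded on $V\times G$) and proves $B=D$ directly, whereas you prove $A=D$ and then invoke Theorem~\ref{T:ubp}.
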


For the proof, we need the following generalization of Hartogs' theorem, due to Terada \cite{Te67,Te72}.

\begin{lemma}\label{L:Terada}
Let $D\subset\CC^N$ and $G\subset\CC^M$ be domains, 
and let $f:D\times G\to\CC$ be a function such that:
\begin{itemize}
\item $z\mapsto f(z,w)$ is holomorphic on $D$, for each $w\in G$;
\item $w\mapsto f(z,w)$ is holomorphic on $G$, for each $z\in F$,
\end{itemize}
where $F$ is a non-pluripolar subset of $D$.
Then $f$ is holomorphic on $D\times G$.
\end{lemma}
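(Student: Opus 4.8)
The plan is to prove Lemma~\ref{L:Terada} by reducing it to the classical Hartogs theorem (Lemma~\ref{L:Hartogs}) together with two standard facts from pluripotential theory: that a countable union of pluripolar sets is pluripolar, and that the relative extremal function of a non-pluripolar set is not identically zero. Holomorphy being local, it suffices to prove $f$ holomorphic near each point of $D\times G$; by the usual reduction (freeze all but one of the coordinates $w_j$, apply the case $M=1$ to get joint holomorphy in $(z,w_j)$, then note that $f$ is separately holomorphic in all $N+M$ variables and invoke Hartogs) it is enough to treat $M=1$, with $N$ arbitrary. Note that $N$ cannot be reduced in the same way, since a non-pluripolar set in $\CC^N$ can meet almost every complex line in a single point or in the empty set. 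After an affine change we may assume $G=\DD$, the unit disc.

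For the case $M=1$ (which, by the above, yields the general case), the first genuine step is a Baire-category argument producing a large set on which $f$ is controlled. Fix $\tau\in(0,1)$. For $z\in F$ the map $w\mapsto f(z,w)$ is holomorphic on $\DD$, hence bounded on $\overline{\DD(0,\tau)}$, so $F\subset\bigcup_{k\ge1}\Sigma_k$ with $\Sigma_k:=\{z\in D:|f(z,w)|\le k\text{ for all }|w|\le\tau\}$. Each $\Sigma_k$ is relatively closed in $D$ (being an intersection of the closed sets $\{\,|f(\cdot,w)|\le k\,\}$, since each $f(\cdot,w)$ is continuous), and since $F$ is non-pluripolar while a countable union of pluripolar sets is pluripolar, some $\Sigma:=\Sigma_k$ is non-pluripolar. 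Thus $|f|\le k$ on $\Sigma\times\overline{\DD(0,\tau)}$ with $\Sigma$ relatively closed and non-pluripolar.

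The crux — the step I expect to be the principal obstacle — is to upgrade this to genuine local boundedness of the family $\{f(\cdot,w):|w|\le\tau\}$ on $D$. Only the plurisubharmonic functions $z\mapsto\log|f(z,w)|$ are available: each is $\le\log k$ on the non-pluripolar set $\Sigma$ but, a priori, bounded above only on compact subsets of $D$ with no uniformity in $w$ — and for an arbitrary such family the upgrade is false. One exploits the holomorphy of $f$ in $w$ over $F$ through a two-constants estimate: passing to a ball $D_0\Subset D$ on which $\Sigma_0:=\Sigma\cap\overline{D_0}$ is still non-pluripolar, the relative extremal function $\omega$ of $\Sigma_0$ in $D_0$ (the upper-regularization of $\sup\{u\in\psh(D_0):u\le0,\ u|_{\Sigma_0}\le-1\}$) is a negative plurisubharmonic function that is \emph{not} identically zero precisely because $\Sigma_0$ is non-pluripolar, and it gives $\log|f(z,w)|\le(1+\omega(z))\sup_{D_0}\log|f(\cdot,w)|-\omega(z)\log k$ on $D_0$. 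The delicate point is that $\sup_{D_0}\log|f(\cdot,w)|$ still depends on $w$; removing this dependence requires a bootstrapping argument that re-feeds the $w$-holomorphy over $F$ — this is the mechanism at the heart of Terada's theorem (and of the more general cross theorem for separately holomorphic functions), and is where essentially all the work lies.

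Granting local boundedness for every $\tau\in(0,1)$, the conclusion follows cleanly. Fix $\tau$ and set $a_n(z):=\frac{1}{2\pi i}\int_{|w|=\tau}w^{-n-1}f(z,w)\,dw$ for $z\in D$; the integrand involves $f$ only on $D\times\{|w|=\tau\}$, and local boundedness makes dominated convergence and Fubini applicable, so combined with $\int f(\cdot,w)=0$ over small contours (each $f(\cdot,w)$ being holomorphic) a Morera/Osgood argument gives $a_n\in\mathcal{O}(D)$ with $|a_n|\le\tau^{-n}\sup_{K\times\{|w|=\tau\}}|f|$ on each compact $K\subset D$. Hence $g(z,w):=\sum_{n\ge0}a_n(z)w^n$ is holomorphic on $D\times\DD(0,\tau)$. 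For $z\in F$ the function $f(z,\cdot)$ is holomorphic on $\DD$, so $a_n(z)$ is its $n$th Taylor coefficient and $g(z,\cdot)=f(z,\cdot)$ there; thus for each fixed $w\in\DD(0,\tau)$ the holomorphic function $g(\cdot,w)-f(\cdot,w)$ on $D$ vanishes on the non-pluripolar set $F$, hence vanishes identically, since the zero set of a non-trivial holomorphic function is pluripolar (its logarithm being plurisubharmonic and $\not\equiv-\infty$). Therefore $f=g$ on $D\times\DD(0,\tau)$, and letting $\tau\uparrow1$ yields $f\in\mathcal{O}(D\times\DD)$.
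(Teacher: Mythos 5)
The paper offers no proof of this lemma --- it is quoted from Terada \cite{Te67,Te72} --- so the only question is whether your argument stands on its own. Its periphery is sound: the reduction to $M=1$ (and your remark on why $N$ cannot be reduced the same way), the decomposition $F\subset\bigcup_k\Sigma_k$ with each $\Sigma_k$ relatively closed and some $\Sigma_k$ non-pluripolar, and the identity-theorem endgame are all correct. But the proposal has a genuine gap exactly where you flag one. The two-constants estimate
\[
\log|f(z,w)|\le\bigl(1+\omega(z)\bigr)\sup_{D_0}\log|f(\cdot,w)|-\omega(z)\log k
\]
still contains $\sup_{D_0}\log|f(\cdot,w)|$, which is finite for each fixed $w$ but completely uncontrolled as $w$ varies, and the ``bootstrapping argument that re-feeds the $w$-holomorphy over $F$'' which you invoke to remove it is not a routine step to be deferred: it \emph{is} Terada's theorem. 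In the classical Hartogs case one escapes because, by Baire, some $\Sigma_k$ has nonempty interior, so Osgood's lemma gives holomorphy on an open set from which to propagate; here non-pluripolarity of $\Sigma_k$ gives no interior, and the propagation must instead run through estimates on the Taylor coefficients $c_n(z)$ of $f(z,\cdot)$, a Hartogs-lemma-type statement for $\frac1n\log|c_n|$ relative to a non-pluripolar set, and the extremal function. As written, the argument proves the lemma only modulo its hardest step.

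A secondary, independent problem: even granting local boundedness of $\{f(\cdot,w):|w|\le\tau\}$, the integrals $a_n(z)=\frac{1}{2\pi i}\int_{|w|=\tau}w^{-n-1}f(z,w)\,dw$ need not be defined for $z\in D\setminus F$, since the hypotheses give no measurability of $w\mapsto f(z,w)$ for such $z$ ($f(z,\cdot)$ may be completely wild off $F$). The same lack of joint measurability of $f$ on $D\times\{|w|=\tau\}$ blocks the Fubini step in your Morera argument. The coefficients can honestly be formed only for $z\in F$, and extending them holomorphically to all of $D$ is again part of the hard step rather than a consequence of it.
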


\begin{proof}[Proof of Theorem~\ref{T:ubpgen}]
We define two subsets $A,B$ of $D$ as follows. First, $z\in A$ if $w\mapsto\sup_{u\in\cU}u(z,w)$ is locally bounded above on $G$. Second, $z\in B$ if there exists a neighborhood $V$ of $z$ in $D$ such that $(z,w)\mapsto\sup_{u\in\cU}u(z,w)$ is locally bounded above on $V\times G$. Clearly $B$ is open in $D$ and $B\subset A$. Also $F\subset A$, so $A$ is non-pluripolar.

Let $z_0\in D\setminus B$. Then there exists $w_0\in G$ such that $\cU$ is not uniformly bounded above on any neighborhood of $(z_0,w_0)$. The same argument as in the proof of Theorem~\ref{T:ubp} leads to the existence of a compact polydisk $P=Q\times R$ around $(z_0,w_0)$ and a function $f:Q\times R\to\CC$ such that:
\begin{itemize}
\item $z\mapsto f(z,w)$ is holomorphic on $\inter(Q)$, for each $w\in\inter(R)$,
\item $w\mapsto f(z,w)$ is holomorphic on $\inter(R)$, for each $z\in \inter(Q)\cap A$,
\end{itemize}
and at the same time $f$ is unbounded in each neighborhood of $(z_0,w_0)$. By Lemma~\ref{L:Terada}, this is possible only if $\inter(Q)\cap A$ is pluripolar. 

Resuming what we have proved: if $z\in D$ and every neighborhood of $z$ meets $A$ in a non-pluripolar set,
then $z\in B$.

We  now conclude the proof with a connectedness argument. As $A$ is non-pluripolar, and a countable union of pluripolar sets is pluripolar, there exists $z_1\in D$ such that every neighborhood of $z_1$ meets $A$ in a non-pluripolar set, and consequently $z_1\in B$. Thus $B\ne\emptyset$. We have already remarked that $B$ is open in $D$. Finally, if $z\in D\setminus B$, then there is a an open neighborhood $W$ of $z$ that meets $A$ in a pluripolar set, hence $B\cap W$ is both pluripolar and open, and consequently empty. This shows that $D\setminus B$ is open in $D$. As $D$ is connected, we conclude that $B=D$, which proves the theorem.
\end{proof}

We end the section with some remarks concerning the sharpness of Theorem~\ref{T:ubpgen}.

Firstly, we cannot weaken both conditions (i) and (ii) simultaneously. 
Indeed, let $\DD$ be the unit disk, and define a sequence $u_n:\DD\times\DD\to\RR$ by
\[
u_n(z,w):=n(|z+w|-3/2).
\]
Then
\begin{itemize}
\item $z\mapsto\sup_n u_n(z,w)$ is bounded above on $\DD$ for all $|w|\le1/2$,
\item $w\mapsto\sup_n u_n(z,w)$ is bounded above on $\DD$ for all $|z|\le1/2$,
\end{itemize}
but the sequence $u_n(z,w)$ is not even pointwise bounded above at the point $(z,w):=(\frac{4}{5},\frac{4}{5})$.

Secondly, the condition in Theorem~\ref{T:ubpgen} that $F$ be non-pluripolar is sharp, at least for $F_\sigma$-sets. Indeed, let $F$ be an $F_\sigma$-pluripolar subset of $D$. Then there exists a plurisubharmonic function $v$ on $\CC^N$ such that $v=-\infty$ on $F$ and $v(z_0)>-\infty$ for some $z_0\in D$. By convolving $v$ with suitable smoothing functions, we can construct a sequence $(v_n)$ of continuous plurisubharmonic functions on $\CC^N$ decreasing to $v$ and such that the sets $\{v_n\le -n\}$ cover $F$. Let $(p_n)$ be a sequence of polynomials in one variable that is pointwise bounded in $\CC$ but not uniformly bounded on any neighborhood of $0$ (such a sequence was constructed at the end of \S\ref{S:pfubp}). Choose positive integers $N_n$ such that
$\sup_{|w|\le n}|p_n(w)|\le N_n$, and define $u_n:D\times\CC\to\RR$ by
\[
u_n(z,w):=v_{N_n}(z)+|p_n(w)|.
\]
Then 
\begin{itemize}
\item $z\mapsto\sup_n u_n(z,w)$ is locally bounded above on $D$ for all $w\in\CC$,
\item $w\mapsto\sup_n u_n(z,w)$ is locally bounded above on $\CC$ for all $z\in F$,
\end{itemize}
but  $\sup_nu_n(z,w)$ is not bounded above on any neighborhood of $(z_0,0)$.

\section{Applications of the uniform boundedness principle}\label{S:appl}

Our first application is to null sequences of plurisubharmonic functions.

\begin{theorem}\label{T:null}
Let $D\subset\CC^N$ and $G\subset\CC^M$ be domains, 
and let $(u_n)$ be a sequence of positive continuous 
plurisubharmonic functions on $D\times G$. Suppose that:
\begin{itemize}
\item $u_n(\cdot,w)\to0$ locally uniformly on $D$ as $n\to\infty$, for each $w\in G$,
\item $u_n(z,\cdot)\to0$ locally uniformly on $G$ as $n\to\infty$, for each $z\in F$,
\end{itemize}
where $F\subset D$ is non-pluripolar.
Then $u_n\to0$ locally uniformly on $D\times G$.
\end{theorem}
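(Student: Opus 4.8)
The plan is to reduce the statement to Theorem~\ref{T:ubpgen} and then run a regularization argument followed by Dini's theorem. First I would apply Theorem~\ref{T:ubpgen} to the family $\{u_n:n\ge1\}$: since $u_n(\cdot,w)\to0$ locally uniformly on $D$, for each compact $K\subset D$ the numbers $\sup_K u_n(\cdot,w)$ form a convergent, hence bounded, sequence in $n$, so $\{u_n\}$ is locally uniformly bounded above on each slice $D\times\{w\}$, and symmetrically on $\{z\}\times G$ for $z\in F$. Theorem~\ref{T:ubpgen} then gives that $\{u_n\}$ is locally uniformly bounded above on all of $D\times G$; this is precisely the input needed for the next step.

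Next I would introduce the tail suprema $W_k:=\sup_{n\ge k}u_n$ on $D\times G$. Each $W_k$ is lower semicontinuous and, by the previous paragraph, locally bounded above, so, being a supremum of plurisubharmonic functions, its upper semicontinuous regularization $W_k^*$ is plurisubharmonic on $D\times G$ and agrees with $W_k$ outside a pluripolar set (the theorem of Bedford and Taylor that negligible sets are pluripolar; see \cite{Kl91}). Note that $0\le W_{k+1}^*\le W_k^*$. The key point is then to show that $W_k^*\to0$ pointwise on $D\times G$: the $W_k$ themselves decrease pointwise to $\limsup_n u_n$, which is $0$ because the first hypothesis forces $u_n\to0$ pointwise; hence the plurisubharmonic function $W_\infty^*:=\lim_k W_k^*$ vanishes off a pluripolar set, in particular Lebesgue-almost everywhere, so by the sub-mean-value inequality $W_\infty^*\le0$, and since $W_\infty^*\ge0$ we conclude $W_\infty^*\equiv0$.

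With $(W_k^*)$ a decreasing sequence of upper semicontinuous functions converging pointwise to $0$, Dini's theorem yields $W_k^*\to0$ uniformly on every compact subset of $D\times G$. Since $0\le u_n\le W_k\le W_k^*$ whenever $n\ge k$, it follows that on a given compact $C$ one has $\sup_C u_n\le\sup_C W_k^*$ for all $n\ge k$; choosing $k$ large makes the right-hand side arbitrarily small, which is exactly the asserted local uniform convergence $u_n\to0$. Here the hypothesis that the $u_n$ are positive is used both to guarantee $W_\infty^*\ge0$ and to upgrade the one-sided estimate $\sup_C u_n\to0$ to genuine convergence.

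I expect the main obstacle to be the implication ``$W_k\to0$ pointwise $\Rightarrow W_k^*\to0$ pointwise'': a priori the regularizations could remain bounded away from $0$ on some small exceptional set. Overcoming this is exactly where the two deeper facts enter --- that the negligible set $\{W_k<W_k^*\}$ is pluripolar, hence Lebesgue-null, and that a nonnegative plurisubharmonic function vanishing almost everywhere must vanish identically. Everything else (the reduction to Theorem~\ref{T:ubpgen}, the Dini step, and the final squeeze) should be routine.
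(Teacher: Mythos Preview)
Your argument is correct, but it takes a different and heavier route than the paper's. Both proofs begin identically, invoking Theorem~\ref{T:ubpgen} to obtain local uniform boundedness of $\{u_n\}$ on $D\times G$. From there, however, the paper proceeds by a direct integral estimate: for $\overline{B}(a,2r)\subset D\times G$, the sub-mean-value inequality together with positivity gives
\[
\sup_{\overline{B}(a,r)} u_n \;\le\; \frac{1}{m(B(0,r))}\int_{B(a,2r)} u_n\,dm,
\]
and the right-hand side tends to $0$ by dominated convergence (pointwise convergence to $0$ plus the uniform bound just obtained). That is the whole proof.

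Your route via the tail suprema $W_k$, their regularizations $W_k^*$, and Dini's theorem is a genuinely different argument. It is valid, but it trades a two-line estimate for a fair amount of pluripotential machinery. One small simplification: you do not need Bedford--Taylor; the elementary fact that $W_k^*=W_k$ Lebesgue-almost everywhere (\cite[Proposition~2.6.2]{Kl91}) already yields $W_\infty^*=0$ a.e., hence everywhere by the sub-mean-value inequality. The paper's approach is shorter and uses nothing beyond the volume mean inequality and dominated convergence; yours has the merit of producing a monotone plurisubharmonic envelope, which is a natural potential-theoretic device and would adapt to situations where one wants finer information than mere convergence to zero.
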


\begin{proof}
Let $a\in D\times G$. Choose $r>0$ such that $\overline{B}(a,2r)\subset D\times G$. 
Writing $m$ for Lebesgue measure on $\CC^N\times \CC^M$,
we have
\begin{align*}
\sup_{\zeta\in\overline{B}(a,r)}u_n(\zeta)
&\le \sup_{\zeta\in\overline{B}(a,r)}\frac{1}{m({B}(\zeta,r))}\int_{{B}(\zeta,r)}u_n\,dm\\
&\le \frac{1}{m({B}(0,r))}\int_{{B}(a,2r)}u_n\,dm.
\end{align*}
Clearly $u_n\to0$ pointwise on $B(a,2r)$. Also, by Theorem~\ref{T:ubpgen}, the sequence $(u_n)$ is uniformly
bounded on $B(a,2r)$. By the dominated convergence theorem, it follows that $\int_{B(a,2r)}u_n\,dm\to0$ as $n\to\infty$. Hence $\sup_{\zeta\in\overline{B}(a,r)}u_n(\zeta)\to0$ as $n\to\infty$.
\end{proof}

Our second application relates to the  problem mentioned at the beginning of \S\ref{S:ubp}. Recall that $u:\Omega\to[-\infty,\infty)$ is plurisubharmonic if
\begin{enumerate}
\item $u$ is upper semicontinuous, and
\item $u|_{\Omega\cap L}$ is subharmonic, for each complex line $L$,
\end{enumerate}
and the problem is to determine whether in fact (2) implies (1).
Here are some known partial results:
\begin{itemize}
\item[-] Lelong \cite{Le45} showed that (2) implies (1) if, in addition,  $u$ is locally bounded above.
\item[-] Arsove \cite{Ar66} generalized Lelong's result by showing that,  if $u$ if separately subharmonic and locally bounded above, then $u$ is upper semicontinuous. (Separately subharmonic means that (2) holds just with lines $L$ parallel to the coordinate axes.) Further results along these lines were obtained in \cite{AG93,KT96,Ri14}.
\item[-] Wiegerinck \cite{Wi88} gave an example of a separately subharmonic function that is not upper semicontinuous. Thus Arsove's result no longer holds without the assumption that $u$ be locally bounded above.
\end{itemize}

In seeking an example to show that (2) does not imply (1), it is natural to try to emulate Wiegerinck's example,
which was constructed  as follows. 
Let $K_n, z_n$ and $p_n$ be defined as in the example at the end of \S\ref{S:pfubp}. For each $n$ define $v_n(z):=\max\{|p_n(z)|-1,\,0\}$. Then $v_n$ is a subharmonic function,  $v_n=0$ on $K_n$ and  $v_n(z_n)>n-1$. Set
\[
u(z,w):=\sum_kv_k(z)v_k(w).
\]
If $w\in\CC$, then $w\in K_n$ for all large enough $n$, so $v_n(w)=0$. Thus, for each fixed $w\in\CC$, the function $z\mapsto u(z,w)$ is a finite sum of subharmonic functions, hence subharmonic. Evidently, the same is true with roles of $z$ and $w$ reversed. Thus $u$ is separately subharmonic. On the other hand, for each $n$ we have
\[
u(z_n,z_n)\ge v_n(z_n)v_n(z_n)>(n-1)^2,
\]
so $u$ is not bounded above on any neighborhood of $(0,0)$.

This example does not answer the question of whether (2) implies (1) because the summands $v_k(z)v_k(w)$ are not plurisubharmonic as functions of $(z,w)\in\CC^2$.
It is tempting to try to modify the construction by replacing $v_k(z)v_k(w)$ by a positive plurisubharmonic sequence $v_k(z,w)$ such that the partial sums $\sum_{k=1}^nv_k$
are locally bounded above on each complex line, but not on any open neighborhood of $(0,0)$.
However, Theorem~\ref{T:ubp} demonstrates immediately  that this endeavor is doomed to failure, at least if we restrict ourselves to continuous plurisubharmonic functions. 

This raises the following question, which, up till now, we have been unable to answer.

\begin{question}\label{Q:cts}
Does Theorem~\ref{T:ubp} remain true without the assumption that the functions in $\cU$ be continuous?
\end{question}

This is of interest because of the following result.

\begin{theorem}
Assume that the answer to Question~\ref{Q:cts} is positive.
Let $\Omega$ be an open subset of $\CC^N$ and let $u:\Omega\to[-\infty,\infty)$ be a function such that $u|_{\Omega\cap L}$ is subharmonic for each complex line $L$. Define
\[ 
s(z):=\sup\{v(z):v \text{~plurisubharmonic on~}\Omega,~v\le u\}.
\]
Then $s$ is plurisubharmonic on $\Omega$. 
\end{theorem}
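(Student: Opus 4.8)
The plan is to deduce the result from the conjectural continuity-free form of Theorem~\ref{T:ubp}, combined with an induction on $N$ and a ``generic complex line'' argument. Put $\mathcal F:=\{v:v\text{ plurisubharmonic on }\Omega,\ v\le u\}$, so that $s=\sup_{v\in\mathcal F}v$; note $\mathcal F\ne\emptyset$, since the constant $-\infty$ belongs to it. The first step is to show that $\mathcal F$ is locally uniformly bounded above on $\Omega$. Once that is done, the upper regularization $s^{*}$ is plurisubharmonic on $\Omega$ by a standard result, $s\le s^{*}$, and $s=s^{*}$ off the pluripolar set $Z:=\{s<s^{*}\}$ (again a classical fact, see \cite{Kl91}); it then remains to prove $s=s^{*}$.

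For the local bound I would induct on $N$. When $N=1$, the hypothesis says exactly that $u$ is subharmonic on $\Omega$, so $s=u$ and there is nothing to prove. For $N\ge 2$, fix $a\in\Omega$; after an affine change of coordinates I may write $\CC^{N}=\CC\times\CC^{N-1}$ and assume $\Omega$ contains a product $\DD\times B'$ centred at $a$, with $\DD$ a disk in $\CC$ and $B'$ a ball in $\CC^{N-1}$. I want to apply the continuity-free uniform boundedness principle (available by the standing hypothesis on Question~\ref{Q:cts}) to the family $\{v|_{\DD\times B'}:v\in\mathcal F\}$ on $\DD\times B'$. Assumption (i) is immediate: for fixed $w'\in B'$, the set $\DD\times\{w'\}$ lies on the complex line $\CC\times\{w'\}$, on which every $v\in\mathcal F$ is $\le u$, while $u$ is subharmonic there, hence locally bounded above. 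For assumption (ii), fix $z\in\DD$ and note that $w'\mapsto u(z,w')$ is subharmonic on every complex line of $\CC^{N-1}$, because $\{z\}\times L'$ is a complex line of $\CC^{N}$ whenever $L'$ is one of $\CC^{N-1}$; hence the inductive hypothesis applies to $u(z,\cdot)$ on $B'$ and shows that $s_{z}:=\sup\{v':v'\text{ plurisubharmonic on }B',\ v'\le u(z,\cdot)\}$ is plurisubharmonic, so locally bounded above. Since $v(z,\cdot)\le s_{z}$ for every $v\in\mathcal F$, the family $\{v(z,\cdot):v\in\mathcal F\}$ is locally uniformly bounded above on $\{z\}\times B'$. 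The uniform boundedness principle now yields a bound on $\mathcal F$ near $a$, and, $a$ being arbitrary, on all of $\Omega$.

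It remains to prove $s^{*}\le u$ on $\Omega$; granting this, $s^{*}\in\mathcal F$, so $s^{*}\le s$ and therefore $s=s^{*}$ is plurisubharmonic. Fix $a\in\Omega$ and a ball $B\subset\Omega$ about $a$. Since $Z$ is pluripolar, choose $\varphi$ plurisubharmonic on $\CC^{N}$ with $\varphi\not\equiv-\infty$ and $Z\subset\{\varphi=-\infty\}$. As $\varphi$ cannot equal $-\infty$ on the open set $B$, and the disks $B\cap L$, over all complex lines $L$ through $a$, cover $B$, there is a complex line $L$ through $a$ with $\varphi|_{B\cap L}\not\equiv-\infty$. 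Then $\varphi|_{B\cap L}$ is subharmonic, so $Z\cap(B\cap L)$ is a polar subset of the disk $B\cap L$; on $B\cap L$ both $s^{*}$ and $u$ are subharmonic, with $s^{*}=s\le u$ off that polar set. Since a subharmonic function dominated almost everywhere by another subharmonic function is dominated everywhere, $s^{*}\le u$ on $B\cap L$, in particular $s^{*}(a)\le u(a)$; as $a$ is arbitrary, this finishes the proof.

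The step I expect to be the main obstacle is the verification of assumption (ii) of the uniform boundedness principle: there is no \emph{a priori} control over the slice families $\{v(z,\cdot):v\in\mathcal F\}$, and I see no way to obtain it other than invoking the theorem in one dimension lower, which is why the whole argument must be organised as an induction on $N$ rather than a single appeal to Theorem~\ref{T:ubp}. A secondary delicate point is the identity $s=s^{*}$: one cannot use the restriction of $s^{*}$ to an arbitrary complex line, since such a line might lie inside the negligible set $Z$; it is essential to choose, through each point, a line meeting $Z$ in a polar set, and this selection is precisely where the upper-semicontinuity content of the underlying problem enters.
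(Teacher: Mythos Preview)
Your argument is correct and follows the same two-step strategy as the paper: first show that $\cU$ is locally uniformly bounded above, then deduce $s^*\le u$ via a generic complex line, whence $s^*\in\cU$ and $s=s^*$. The paper compresses the first step into a single sentence (``If the answer to Question~\ref{Q:cts} is positive, then $\cU$ is locally uniformly bounded above on $\Omega$''), but you are right that a direct appeal to Theorem~\ref{T:ubp} only controls one slice direction at a time, so an iterated or inductive application is implicitly required; your explicit induction on $N$ simply unpacks what the paper takes for granted.

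For the second step the paper argues slightly differently: rather than invoking the pluripolarity of $\{s<s^*\}$ and choosing a psh function $\varphi$ to locate a good line, it uses the Lebesgue-a.e.\ equality $s=s^*$ together with Fubini to find, through each point~$z$, a complex line $L$ on which $s=s^*$ a.e., and then writes out the mean-value estimate $s^*(z)\le\int_{B(z,r)\cap L}u\,d\mu_r\to u(z)$ explicitly (this is exactly the content of your assertion that a subharmonic function dominated a.e.\ by another subharmonic function is dominated everywhere). The two versions are interchangeable; yours packages the endgame as a lemma, the paper spells it out.
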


\begin{proof}
Let $\cU$ be the family of plurisubharmonic functions $v$ on $\Omega$ such that $v\le u$. If the answer to Question~\ref{Q:cts} is positive, then $\cU$ is locally uniformly bounded above on $\Omega$. Hence, by \cite[Theorem~2.9.14]{Kl91}, the upper semicontinuous regularization $s^*$ of $s$ is plurisubharmonic on $\Omega$,
and, by \cite[Proposition~2.6.2]{Kl91},  $s^*=s$ Lebesgue-almost everywhere on $\Omega$. Fix $z\in\Omega$. Then there exists a complex line $L$ passing through $z$ such that $s^*=s$ almost everywhere on $\Omega\cap L$. Let $\mu_r$ be normalized Lebesgue measure on $B(z,r)\cap L$. Then
\[
s^*(z)\le\int_{B(z,r)\cap L} s^*\,d\mu_r=\int_{B(z,r)\cap L} s\,d\mu_r\le\int_{B(z,r)\cap L} u\,d\mu_r.
\]
(Note that $u$ is Borel-measurable by \cite[Lemma~1]{Ar66}.)
Since $u|_{\Omega\cap L}$ is upper semicontinuous, we can let $r\to0^+$ to deduce that $s^*(z)\le u(z)$. Thus $s^*$ is itself a member of $\cU$, so $s^*\le s$, and thus finally $s=s^*$ is plurisubharmonic on $\Omega$.
\end{proof}

Of course, $s=u$ if and only if $u$ is itself plurisubharmonic. 
Maybe this could provide a way of attacking the problem of showing that $u$ is plurisubharmonic?

\end{document}